\documentclass[11pt,leqno]{amsart}
\usepackage{amsmath}
\usepackage{amsthm}
\usepackage{amssymb}
\usepackage{graphicx}
\usepackage{amsfonts}
\usepackage{latexsym, wasysym}
\usepackage{hyperref,subcaption}
\usepackage{float}
\usepackage{makecell}

\graphicspath{{IMAGES/}}

\usepackage{enumitem}

\usepackage[font=footnotesize,labelfont=bf]{caption}

\usepackage[table]{xcolor}
\usepackage{cite}
\usepackage{multicol}
\usepackage{multirow}

\theoremstyle{plain}
%\numberwithin{theorem}{section}
\newtheorem{theorem}{Theorem}

\newtheorem{lemma}[theorem]{Lemma}

\theoremstyle{definition}

%% New Commands

\newcommand{\K}{\mathbb{K}}

\let\oldenumerate=\enumerate
	\def\enumerate{
	\oldenumerate
	\setlength{\itemsep}{5pt}
	}
\let\olditemize=\itemize
	\def\itemize{
	\olditemize
	\setlength{\itemsep}{5pt}
	}

\linespread{1.05}

\allowdisplaybreaks

\frenchspacing % No double spacing between sentences
\linespread{1.2} % Set linespace
\usepackage[a4paper, lmargin=0.1666\paperwidth, rmargin=0.1666\paperwidth, tmargin=0.1111\paperheight, bmargin=0.1111\paperheight]{geometry} %margins

\usepackage[all]{nowidow} % Tries to remove widows

\usepackage{lipsum} % Used for inserting dummy 'Lorem ipsum' text into the template
\usepackage{comment}

%%
%% MAIN DOCUMENT
%%

\begin{document}

\title{Explicit Interval Estimates for Prime Numbers}

    \author[M.~Cully-Hugill]{Michaela Cully-Hugill}
	\address{School of Science, UNSW Canberra, Northcott Drive, Australia ACT 2612} 
	\email{m.cully-hugill@unsw.edu.au}
	%\urladdr{}

	\author[E.~S.~Lee]{Ethan S. Lee}
	\address{School of Science, UNSW Canberra, Northcott Drive, Australia ACT 2612} 
	\email{ethan.s.lee@student.adfa.edu.au}
	%\urladdr{\url{https://www.unsw.adfa.edu.au/our-people/mr-ethan-lee}}
	
\thanks{}

\subjclass[2020]{}

\keywords{}

\begin{abstract}
Using a smoothing function and recent knowledge on the zeros of the Riemann zeta-function, we compute pairs of $(\Delta,x_0)$ such that for all $x \geq x_0$ there exists at least one prime in the interval $(x(1-\Delta^{-1}),x]$. This arXiv version has been updated with corrections to the paper \cite{CH_L_22}.
\end{abstract}

\maketitle
%\tableofcontents

\section{Introduction}

In 1845, Bertrand \cite{Bertrand_45} postulated that there is at least one prime $p$ satisfying $n < p < 2n-2$ for every integer $n > 3$. In 1850, Chebyshev proved Bertrand's postulate and this kick-started an area of research into the existence of primes in intervals. We say that an interval $[x,x+h]$ is \textit{short} if $h = o(x)$ and \textit{long} if $h = \Omega(x)$, hence Bertrand's postulate is a long interval estimate.

There are a number of applications for long interval estimates. Although longer for large $x$, they can be smaller than short intervals for sufficiently small $x$, and so can verify primes in short intervals for small $x$; see \cite{Dudek_16} for an example. Long interval results have also been used in problems from additive prime number theory. For example, Dressler \cite{Dressler, DresslerAddendum} refined Bertrand's postulate to show that every positive integer $n\not\in\{1, 2, 4, 6, 9\}$ can be written as the sum of distinct odd primes. Another example is for the ternary Goldbach conjecture: before Helfgott \cite{Helfgott} offered a full proof, Ramar\'{e} and Saouter \cite{RamareSaouter} verified the ternary Goldbach conjecture up to $10^{22}$. For these and other reasons, mathematicians have sought to improve, generalise, or refine Bertrand's postulate.
For instance, Shevelev \textit{et al.} \cite{Shevelev} showed that the only integers $k\leq 10^8$ for which there exists a prime in $(k n, (k+1) n)$ for every integer $n > 1$ are $k\in\{1,2,3,5,9,14\}$; this is an extension of \cite{Bachraoui, Loo, Nagura}. 

There are several types of short interval estimates. For example, Axler \cite{Axler} proved there is always a prime in $(x,x(1 + 198.2/\log^4{x})]$ for $x > 1$. The smallest short interval estimate in the long run would be from Baker, Harman, and Pintz \cite{Baker}, with primes in $[x, x + x^{0.525}]$ for sufficiently large $x$. This, however, has not been made explicit, so it is not known how large $x$ would need to be. Better intervals are possible if assuming the Riemann hypothesis.  Cram{\'e}r showed that there are primes in $(x,x+c\sqrt{x}\log x]$ for sufficiently large $x$ and some constant $c>0$. Dudek \cite{Dudek_15} determined that we can take $c=3+\epsilon$ for sufficiently large $x$, which was improved in \cite{D_G_M_16} {to $c=1+\epsilon$}, and most recently reduced to $c=\frac{22}{25}$ by Carneiro, Milinovich, and Soundararajan \cite[Thm.~5]{C_M_S_19} for $x\geq 4$.

Bertrand's interval has been narrowed in the form $(x(1 - \Delta^{-1}), x]$, for all $x\geq x_0$ and some large constant $\Delta$. Chronologically, Schoenfeld \cite[Thm.~12]{Schoenfeld76} proved we can take $\Delta=16\,598$ for all $x > 2\,010\,760$; Ramar\'{e} and Saouter \cite{RamareSaouter} computed a table of $\Delta$ for  $x>x_0>4\cdot 10^{18}$ for a range of $x_0$, using analytic estimates and a smoothing argument; and Kadiri and Lumley \cite{KadiriLumley} refined Ramar\'{e} and Saouter's method. A selection of $\Delta$ from the latter two are given in Table \ref{comparison_table}. It is important to note that for $x\leq 4\cdot 10^{18}$ there is no need for interval estimates, as Oliveira e Silva, Herzog, and Pardi \cite{O_H_P_14} have computed the gaps between primes in this range.

In this article, we update Kadiri and Lumley's work \cite{KadiriLumley} with the most recent explicit estimates for the Riemann zeta-function $\zeta(s)$ and the Chebyshev functions
\begin{equation*}
    \theta(x) = \sum_{p\leq x}\log{p}, \qquad \psi(x) = \sum_{r= 1}^{\left[\frac{\log{x}}{\log{2}}\right]}\theta(x^{\frac{1}{r}}).
\end{equation*}
In particular, we use the zero-density estimate from Kadiri, Lumley, and Ng \cite{K_L_N_2018}, the zero-counting function estimate from Platt and Trudgian \cite{P_T_15}, the zero-free region from Mossinghoff and Trudgian \cite{MossinghoffTrudgian2015}, the Riemann height from Platt and Trudgian \cite{PlattTrudgianRH}, and bounds on $\psi(x)-\theta(x)$ from Costa Pereira \cite{Costa} and Broadbent \textit{et al.} \cite{BKLNW_20}. We also use the $L$-functions and Modular Forms Database (LMFDB) \cite{lmfdb} to extend previous computation of sums over zeros of $\zeta(s)$ and a new generalised version of Ramar\'{e} and Saouter's smooth weight. The resulting problem is one of numeric optimisation, in that we seek to maximise $\Delta$ subject to a constraint, defined over several parameters. These improvements lead to Theorem \ref{thm:main}.

\begin{theorem}\label{thm:main}
For each pair $(\Delta, x_0)$ in Table \ref{table1}, there exists at least one prime in the interval $(x(1 - \Delta^{-1}), x]$ for all $x\geq x_0$.
\end{theorem}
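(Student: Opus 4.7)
The plan is to follow the smoothed explicit-formula method pioneered by Ramaré-Saouter and refined by Kadiri-Lumley, substituting the sharper explicit inputs itemized in the paragraph preceding Theorem~\ref{thm:main}. Fix a candidate pair $(\Delta,x_0)$ and set $y = x(1-\Delta^{-1})$. Introduce a nonnegative compactly supported smooth weight $f_\Delta$ whose support lies in $(y,x]$, and consider the smoothed prime sum
\[ S(x) \;=\; \sum_{n \geq 1} \Lambda(n)\, f_\Delta(n). \]
If $S(x) > 0$ for every $x \geq x_0$, then some $n \in (y,x]$ carries $\Lambda(n)>0$; the Costa Pereira and Broadbent et al.\ bounds on $\psi-\theta$ then let us discard the prime-power contribution, so that an actual prime must lie in $(y,x]$.

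The next step is to apply an explicit formula to $S(x)$, writing it as the main term $\int_0^\infty f_\Delta(t)\,dt$, minus a sum $\sum_\rho \widehat{f_\Delta}(\rho)$ over the nontrivial zeros $\rho=\beta+i\gamma$ of $\zeta(s)$, plus negligible contributions from the pole at $s=1$ and the trivial zeros. Here $\widehat{f_\Delta}$ is a Mellin-type transform whose decay in $|\gamma|$ is governed by the smoothness and width of $f_\Delta$. I would work with a generalization of Ramaré-Saouter's smooth weight (as flagged in the introduction), parametrized so that both the total mass $\int f_\Delta$ and the transform tails can be tuned against one another.

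The technical heart is then bounding $\bigl|\sum_\rho \widehat{f_\Delta}(\rho)\bigr|$, and I would split this sum by ordinate into three regimes. For $|\gamma| \leq H$ with $H$ the Riemann height of Platt-Trudgian, all zeros satisfy $\beta = 1/2$, and one evaluates the contribution either directly against LMFDB data or by a trivial bound using the zero-counting function. For $H < |\gamma| \leq T$, with $T$ a free cutoff, combine partial summation with the Platt-Trudgian zero-counting estimate and the Kadiri-Lumley-Ng zero-density estimate to control the effect of potentially off-line zeros. For $|\gamma| > T$, use the Mossinghoff-Trudgian zero-free region together with the rapid decay of $\widehat{f_\Delta}$.

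This reduces Theorem~\ref{thm:main} to a numerical optimization: for each $x_0$ in Table~\ref{table1}, choose the parameters of $f_\Delta$ and the cutoffs $H$, $T$ so that $\int f_\Delta$ dominates the bound on the zero sum, and then push $\Delta$ as large as the inequality allows; since both sides scale with $x$ in a controlled way, checking the inequality at $x=x_0$ is enough. The main obstacle will be this balancing act rather than any single estimate: narrower weights (needed for larger $\Delta$) reduce the main term and worsen the transform's decay, so the benefit of the newer zero-density and zero-free-region inputs must be harvested carefully in the choice of weight and cutoffs to yield genuine improvement over Kadiri-Lumley.
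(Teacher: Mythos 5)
Your overall strategy is the same family as the paper's: smooth the prime-counting difference, apply an explicit formula, control the zero sum using the Platt--Trudgian height $H$, the Kadiri--Lumley--Ng density estimate, the Mossinghoff--Trudgian zero-free region and the $\psi-\theta$ bounds, then optimize numerically. One structural difference is worth noting: the paper does not derive a fresh explicit formula for a single weight supported in $(y,x]$. It re-uses Kadiri and Lumley's ready-made criterion (Theorem \ref{thm:KLMainTheorem}), in which the smoothing is the Ramar\'e--Saouter two-parameter device --- averaging $\theta(e^{u}X(1+\delta t))-\theta(X(1+\delta t))$ over $t\in[0,1]$ against an $m$-admissible weight, here the new $f_2(t)=(At^n(1-t))^m$ --- and the only new work is sharper bounds for the zero sums $S_1,\dots,S_5$, for $F(k,m,\delta)$, and the numerics; the condition \eqref{condition} is arranged so that every term is decreasing in $X_0$, which is what makes ``check it once for $X_0$'' legitimate, a point you assert rather than arrange.

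The genuine gap is in your allocation of tools across the zero-sum split. You split by ordinate only, proposing zero-counting plus the zero-density estimate for $H<|\gamma|\le T$ and reserving the zero-free region for $|\gamma|>T$. A density estimate such as \eqref{zd_K} bounds only the \emph{number} of zeros with $\beta>\sigma$; it gives no bound on how close $\beta$ can be to $1$ (the term $B(\sigma)(\log T)^2$ does not vanish as $\sigma\to1$). In the range just above $H$ your transform has little or no decay whenever the interval is narrower than $1/H$ in relative terms, i.e.\ whenever $\Delta$ exceeds roughly $H\approx3\times10^{12}$ --- which is the case for most rows of Table \ref{table1}, where $\Delta$ reaches $10^{35}$ (equivalently, $\delta^{-1}\gg H$ in the paper's parametrization). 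There, a single hypothetical zero with $\beta$ arbitrarily close to $1$ and ordinate in $(H,T]$ would contribute on the order of $x^{\beta-1}$ times the interval length, i.e.\ a bounded proportion of the main term, and no counting or density statement can exclude it. The zero-free region is precisely the missing ingredient, and it must be applied from $H$ upward, where it is strongest (the factor $X_0^{-1/(R_0\log\gamma)}$ weakens as $\gamma$ grows), not only in the far tail $|\gamma|>T$ where the decay of the transform already does the work. This is exactly how the paper proceeds: zeros with $\gamma>H$ are split by \emph{real part} at $\sigma_0=0.7804$, with $X^{\beta-1}\le X^{\sigma_0-1}$ for $\beta\le\sigma_0$ (the $B_3$ terms) and the density estimate \emph{combined with} the zero-free region for $\beta>\sigma_0$ (the $B_{41}$, $B_{42}$ terms through $S_4$ and $S_5$). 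As written, your middle regime cannot be closed, so the reduction to a numerical optimization does not yet follow.
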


Table \ref{comparison_table} gives a selection of admissible $\Delta$ for Theorem \ref{thm:main}, and the most recent improvements to $\Delta$ from previous work.
\begin{table}[H]
\centering
\begin{tabular}{l|ccc}
\multirow{2}{*}{$\log x_0$} & \multicolumn{3}{c}{$\Delta$} \\ 
\cline{2-4} & \multicolumn{1}{c}{\cite[Thm.~2]{RamareSaouter}} & \multicolumn{1}{c}{\cite[Thm.~1.1]{KadiriLumley}} & \multicolumn{1}{c}{Theorem \ref{thm:main}} \\ 
\hline
46 & $8.13\cdot 10^7$ & $1.48\cdot 10^8$ & $1.63\cdot 10^8$ \\
50 & $1.90\cdot 10^8$ & $7.53\cdot 10^8$ & $1.06\cdot 10^{9}$ \\
55 & $2.07\cdot 10^8$ & $1.77\cdot 10^9$ & $1.02\cdot 10^{10}$ \\
60 & $2.09\cdot 10^8$ & $1.96\cdot 10^9$ & $7.69\cdot 10^{10}$ \\
150 & $2.12\cdot 10^8$ & $2.44\cdot 10^9$ & $2.07\cdot 10^{11}$
\end{tabular}
\caption{Values of $\Delta$ for select $x_0$ in Theorem \ref{thm:main}, compared to those in \cite{RamareSaouter} and \cite{KadiriLumley}.}
\label{comparison_table}
\end{table}

\noindent The improvement in our results largely comes from the higher Riemann height, the zero-density estimate from \cite{K_L_N_2018} and Broadbent \textit{et al.}'s estimate for $\psi(x)-\theta(x)$.

\subsection*{Structure}

Theorem \ref{thm:main} is obtained by applying the new explicit estimates and smooth weight to Theorem \ref{thm:KLMainTheorem} (in Section \ref{sec:backgound_theory}). Theorem \ref{thm:KLMainTheorem} is an analogous result to Theorem 2.7 in \cite{KadiriLumley}, and hence largely follows the proof in \cite{KadiriLumley}.

The explicit estimates we use are detailed in Section \ref{sec:Background_results}. 
The smoothing method is detailed in Section \ref{sec:backgound_theory}, with corresponding result Theorem \ref{thm:KLMainTheorem}. We implement the new explicit estimates in Section \ref{sec:smoothing_fn_etc}, and also define the new smooth weight. We also show how this weight affects the smoothing argument. Finally, we compute pairs of $(\Delta,x_0)$ in Section \ref{sec:results}, and provide some commentary on the optimisation problem. We lastly discuss avenues for future research.

\subsection*{Acknowledgements}

We extend our gratitude to Tim Trudgian for suggesting this project to us and for his advice throughout its progress. We also thank Richard Brent, David Platt, and Olivier Ramar\'{e} for their correspondence and suggestions.

\section{Prerequisite Results}\label{sec:Background_results}

Our problem is initially defined in terms of $\theta(x)$, but can be translated to $\psi(x)$ using estimates such as from Costa Pereira \cite[Thm.~5]{Costa} 
\begin{equation}\label{Costa_diff}
    \psi(x)-\theta(x) > 0.999x^\frac{1}{2} + x^\frac{1}{3}
\end{equation}
for all $x\geq e^{38}$, and Broadbent \textit{et al.} \cite[Cor.~5.1]{BKLNW_20},
\begin{equation}\label{Broadbent_diff}
    \psi(x) - \theta(x) < \alpha_1 x^\frac{1}{2} + \alpha_2 x^\frac{1}{3},
\end{equation}
with $\alpha_1= 1+ 1.93378 \cdot 10^{-8}$ and $\alpha_2 = 1.04320$, for all $x\geq e^{40}$. The latter is an improvement on the previous upper bound used in \cite{KadiriLumley}, from Costa Pereira \cite{Costa}.

In working with $\psi(x)$ we can use the truncated explicit Riemann--von Mangoldt formula \cite[\S 17]{Davenport}. One of the terms in this formula is a sum over the non-trivial zeros of $\zeta(s)$, which can be estimated using zero-density estimates and zero-free regions. We detail the most recent of these estimates in the following sections. Henceforth, unless otherwise stated, suppose that $s=\sigma+it$, $\rho=\beta+i\gamma$ is a non-trivial zero of $\zeta(s)$ with $0<\beta<1$, and $Z(\zeta)$ denotes the set of non-trivial zeros of $\zeta(s)$. 

\subsection{The Riemann height}

The Riemann Hypothesis (RH) states that $\beta=\frac{1}{2}$ for all $\rho$ in the critical strip. If the RH is known to be true for $|\gamma|\leq H$, then $H$ is called an admissible \textit{Riemann height}.
%Platt \cite{Platt_17} proved that $H = 3.06\cdot 10^{10}$ is admissible, which was used by Kadiri and Lumley.
Platt and Trudgian \cite{PlattTrudgianRH} have recently announced that $H = 3,000,175,332,800$ is an admissible Riemann height, and this is the value we will use in our computations.

\subsection{Zero-free region of the Riemann zeta function}

Vall\'{e}e Poussin \cite{ValeePoussin} proved that there exists a constant $R_0>0$ such that $\zeta(s)\neq 0$ in the region
\begin{equation}\label{eqn:dlvp}
    \sigma \geq 1-\frac{1}{R_0\log t}
\end{equation}
for all $|t|\geq T$. The best $R_0$ to date is from Mossinghoff and Trudgian \cite{MossinghoffTrudgian2015}, of $R_0=5.573412$ and $T=2$.

Koborov \cite{koborov58} and Vinogradov \cite{vinogradov58} independently established an asymptotically superior zero-free region. Ford \cite{ford2002zero} made this result explicit, but it is only better than that of \cite{MossinghoffTrudgian2015} for $|t|>\exp(10\,151.5)$. Our estimates only use the zero-free region at the Riemann height $H$, hence we only use the result of \cite{MossinghoffTrudgian2015}.

\subsection{Zero-density estimates of the Riemann zeta-function}

Zero-density estimates bound the number of zeros of $\zeta(s)$ in some rectangular area of the critical strip. The typical notation is, for $\frac{1}{2}\leq \sigma<1$,
\begin{align*}
    N(\sigma, T) &= \#\{\rho:\sigma < \beta < 1, 0 < \gamma < T\}.
\end{align*}
The number of zeros in the full critical strip is denoted $N(T)$.
Backlund \cite{Backlund_18} established that $N(T)=P(T) + O(\log T)$, where
\begin{equation*}
    P(T) := \frac{T}{2\pi}\log{\frac{T}{2\pi}} - \frac{T}{2\pi} + \frac{7}{8}.
\end{equation*}
This has been made explicit, in that there are constants $a_1, a_2, a_3$ such that
\begin{equation}\label{classical_T}
    |N(T)-P(T)|\leq R(T)
\end{equation}
where $R(T) = a_1\log{T} + a_2\log\log{T} + a_3$ for all $T \geq T_0$. A summary of previous estimates for $R(T)$ is given in \cite{Trudgian_14}, and Trudgian proves that we can take
$$(a_1, a_2, a_3) = (0.112, 0.278, 2.510)$$
for $T_0=e$. Platt and Trudgian \cite[Cor.~1]{P_T_15} improved these values to\footnote{{One could potentially improve our computations using Hasanalizade \textit{et al.}'s results in \cite{HasanalizadeShenWongRiemann}, however we expect any changes to be marginal. At the time of writing this paper, \cite{HasanalizadeShenWongRiemann} had not been released, so we proceeded with Platt and Trudgian's result from \cite{P_T_15}.}}
$$(a_1, a_2, a_3) = (0.110, 0.290, 2.290).$$

For $N(\sigma, T)$, the best explicit estimate for $\sigma$ near 1 is from Kadiri, Lumley, and Ng \cite{K_L_N_2018}. Table 1 of \cite{K_L_N_2018} give values of $A(\sigma)$ and $B(\sigma)$ such that for $T\geq H$, $k\in \left[ 10^9 / H, 1\right]$, and $\sigma>1/2$, we have
\begin{equation}\label{zd_K}
    N(\sigma, T) \leq A(\sigma) \left( \log(kT) \right)^{2\sigma} (\log T)^{5-4\sigma}T^{\frac{8}{3}(1-\sigma)} + B(\sigma) (\log T)^2 := D(\sigma,T).
\end{equation}
To give an example, for $\sigma = 0.9$ we can take $A(\sigma) = 11.499$ and $B(\sigma) = 3.186$. It is possible to recalculate $A$ and $B$ with Platt and Trudgian's Riemann height $H$, using the expressions in (4.72) and (4.73) of \cite{K_L_N_2018}. This effects a drop in both constants, so we use these constants in \eqref{zd_K} for further calculations.

Simoni\v{c} \cite{Simonic_19} has given an asymptotically smaller estimate than (\ref{zd_K}) for $\sigma$ near $1/2$. We have not used this estimate because the zero-density estimates are only used at the Riemann height, at which point (\ref{classical_T}) is smaller than that of \cite{Simonic_19} for $\sigma\in [1/2,5/8]$ --- after which (\ref{zd_K}) is better than both.

\section{The Main Theorem}\label{sec:backgound_theory}

In what follows, let $y=(1-\Delta^{-1})x$. To prove that $(\Delta, x_0)$ is a pair for which there exists a prime in $(y, x]$ for all $x\geq x_0$, we will ensure
\begin{equation}\label{condition_theta}
    \theta(x) - \theta(y) > 0
\end{equation}
for $x\geq x_0$. Kadiri and Lumley use Ramar\'{e} and Saouter's smoothing argument from \cite{RamareSaouter}, which involves splitting the interval into two parts: one to be estimated with the Riemann--von Mangoldt explicit formula, and the other with the Brun--Titchmarsh theorem. The latter is useful for minimising the number of primes near the endpoints of the interval. We will also follow this argument.

\subsection{Definitions}

Throughout the remainder of the paper, suppose that
\begin{itemize}
    \item $m\geq 2$ is an integer,
    \item $u=\delta/m$ for $0\leq \delta \leq 10^{-6}$,
    \item $0 \leq a < 1/2$, and
    \item $X\geq X_0 \geq 3.99\cdot 10^{18}$.
\end{itemize}
Note that the bound on $X_0$ is to allow $x_0$ to be as small as $4\cdot 10^{18}$, given the bounds on $m$, $\delta$, and $a$.
We also set $x=e^u X(1+\delta(1-a))$, $y = X(1+\delta a)$, and $x_0= e^u X_0(1+\delta(1-a))$; it follows that
\begin{equation*}
    \Delta = \left(1-\frac{1+\delta a}{e^u (1+\delta(1-a))}\right)^{-1}.
\end{equation*}

As in \cite{RamareSaouter}, suppose $f$ is a smooth weight that is `$m$-admissible' over $[0, 1]$. We require this so that:
\begin{itemize}
    \item $f$ is $m$-times differentiable,
    \item $f^{(k)}(0) = f^{(k)}(1) = 0$ for $0 \leq k \leq m - 1$,
    \item $f$ is non-negative, and
    \item $f$ is not identically zero.
\end{itemize}
Further to this condition, we use the notations
\begin{align*}
   ||f||_1 &= \int_0^1 |f(t)| dt, 
   \qquad ||f||_2 = \left( \int_0^1 |f(t)|^2 dt\right)^\frac{1}{2}, \\
   \nu(f,a) &= \int_0^a f(t) dt + \int_{1-a}^1 f(t) dt,\\
   \Xi_{F}(u, X, \delta, t) &= F(\exp(u) X(1+\delta t)) - F(X(1+\delta t)), \\
   I(\delta, u, X) &= \frac{1}{||f||_1} \int_0^1 \Xi_{\theta}(u, X, \delta, t) f(t) dt,\\
   J(\delta, u, X) &= \frac{1}{||f||_1} \int_0^1 \Xi_{\psi}(u, X, \delta, t) f(t) dt.
\end{align*}

\subsection{Statement of the main theorem}

For all $t\in [a, 1-a]$ we have
\begin{equation*}
    \Xi_{\theta}(u, X, \delta, t) \leq \theta(x) - \theta(y).
\end{equation*}
Multiplying both sides by $f(t)$, and integrating over $t\in [a, 1-a]$ gives
$$\theta(x) - \theta(y)\geq I(\delta, u, X) - \frac{1}{||f||_1} \left( \int_0^a + \int_{1-a}^1 \right) \Xi_{\theta}(u, X, \delta, t) f(t) dt.$$  

We can estimate $\Xi_{\theta}$ with Montgomery and Vaughan's version of the Brun--Titchmarsh theorem in \cite[Thm.~2]{M_V_73}, which states that for $M>0$ and $N>1$, 
\begin{equation}\label{BT}
\pi(M+N)-\pi(M) \leq \frac{2N}{\log N}.
\end{equation}
Then, for $X>(e^u-1)^{-1}$, we have
\begin{align*}
    \Xi_{\theta}(u, X, \delta, t) \leq \frac{2(e^u-1)(1 + \delta t) X \log(e^u(1+\delta t) X)}{\log((e^u - 1)(1+\delta t)X)};
\end{align*}
note that this expression increases in $t$. It follows that
\begin{align*}
    \left( \int_0^a + \int_{1-a}^1 \right) \Xi_{\theta}(u, X, \delta, t) f(t) dt &\leq E(X),
\end{align*}
where 
\begin{align}\label{E(X)}
    E(X) = \frac{2(e^u-1)(1 + \delta) X\log(e^u(1+\delta) X) \nu(f,a)}{\log((e^u - 1)(1+\delta)X)}.
\end{align}
Using \eqref{Costa_diff}, \eqref{Broadbent_diff}, and the bounds on $m$, $\delta$, $a$, $X_0$, we arrive at
\begin{equation*}
    I(\delta, u, X) \geq J(\delta, u, X) - \omega\sqrt{X},
\end{equation*}
with $\omega = 1.0344\cdot 10^{-3}$. The explicit Riemann--von Mangoldt formula can be used to estimate $J(\delta, u, X)$. This process would be identical to the one in \cite{KadiriLumley}, which involves splitting a sum over the non-trivial zeros of $\zeta(s)$, and estimating these with zero-density estimates and a zero-free region. Therefore, we can directly take the bound for $J(\delta, u, X)$ from \cite{KadiriLumley} given by equations (18), (19), and Lemma 2.6. With this, we arrive at Theorem \ref{thm:KLMainTheorem}.

\begin{theorem}\label{thm:KLMainTheorem}
Suppose that $\gamma$ are ordinates such that $\rho = \beta + i\gamma\in Z(\zeta)$.
There exists a prime in $((1-\Delta^{-1})x,x]$ for all $X\geq X_0$ if
\begin{equation}\label{condition}
\begin{aligned}
    F(0,m,\delta)
    - B_0(m,\delta){X_0}^{-\frac{1}{2}}
    - B_1(m, \delta, T_1){X_0}^{-\frac{1}{2}}
    - B_2(m, \delta, T_1){X_0}^{-\frac{1}{2}} & \,\\
    - B_3(m, \delta, \sigma_0){X_0}^{\sigma_0-1}
    - B_3(m, \delta, 1-\sigma_0){X_0}^{-\sigma_0}
    - B_{41}(X_0, m, \delta, \sigma_0) & \, \\
    - B_{42}(m, \delta, \sigma_0){X_0}^{-1+\frac{1}{R_0\log{H}}}
    - \frac{u}{2(\exp(u) - 1){X_0}^{2}}  & \, \\
    - \frac{\omega}{(\exp(u) - 1){X_0}^{\frac{1}{2}}}
    - \frac{E(X_0)}{||f||_1 (\exp(u)-1)X_0} & > 0,
\end{aligned}
\end{equation}
where $E(X)$ is defined in (\ref{E(X)}).
The functions $B_i$ are defined as follows. For $0\leq k \leq m$, $s = \sigma + i\tau$ with $\tau > 0$, and $0\leq\sigma\leq 1$, let
\begin{equation*}
    F(k,m,\delta) = \frac{1}{||f||_1} \int_0^1 (1 + \delta t)^{1+k} |f^{(k)}(t)| dt.
\end{equation*}
First,
\begin{align*}
    B_0(m,\delta) = \min\left\{\frac{4 F(0,m,\delta)}{\exp(u/2) + 1} N_0, \frac{4 F(1,m,\delta)}{(\exp(u/2) + 1)\delta} S_0\right\},
\end{align*}
in which $N_0$ denotes the number of zeros $\rho\in Z(\zeta)$ such that $0<\gamma \leq T_0$ and
\begin{equation*}
    \sum_{0<\gamma \leq T_0}\frac{1}{\gamma} \leq S_0.
\end{equation*}
Second,
\begin{align*}
    B_1(m,\delta,T_1) = \min\left\{\frac{4 F(0,m,\delta)}{\exp(u/2) + 1} \left(N(T_1) - N_0\right), \frac{4 F(1,m,\delta)}{(\exp(u/2) + 1)\delta} S_1(T_0, T_1)\right\},
\end{align*}
in which
\begin{equation*}
    \sum_{T_0 < \gamma \leq T_1} \frac{1}{\gamma} \leq S_1(T_0, T_1).
\end{equation*}
Third,
\begin{equation*}
    B_2(m,\delta,T_1) = \frac{2 F(m,m,\delta)}{(\exp(u/2) - 1)\delta^{m}} S_2(m,T_1),\text{ in which } \sum_{T_1 < \gamma \leq H} \frac{1}{{\gamma}^{m+1}} \leq S_2(m, T_1).
\end{equation*}
Fourth,
\begin{equation*}
    B_3(m,\delta,\sigma) = \frac{2 F(m,m,\delta) (\exp(u\sigma) + 1)}{(\exp(u) - 1)\delta^{m}} S_3(m),\text{ in which } \sum_{\gamma > H} \frac{1}{{\gamma}^{m+1}} \leq S_3(m).
\end{equation*}
Fifth,
\begin{align*}
    B_{41}(X_0,m,\delta,\sigma_0) &= \frac{2 F(m,m,\delta) (\exp(u) + 1)}{(\exp(u) - 1)\delta^{m}} S_5(X_0,m,\sigma_0),\\
    B_{42}(m,\delta,\sigma_0) &= \frac{2 F(m,m,\delta) (\exp(u) + 1)}{(\exp(u) - 1)\delta^{m}} S_4(m,\sigma_0),
\end{align*}
in which
\begin{equation*}
    \sum_{\substack{\sigma_0 < \beta < 1\\\gamma > H}} \frac{1}{{\gamma}^{m+1}} \leq S_4(m, \sigma_0),\qquad
    \sum_{\substack{\sigma_0 < \beta < 1\\\gamma > H}} \frac{{X_0}^{-\frac{1}{R_0\log{\gamma}}}}{{\gamma}^{m+1}} \leq S_5(X_0, m, \sigma_0).
\end{equation*}
\end{theorem}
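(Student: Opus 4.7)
The plan is to follow Ramaré--Saouter's smoothing strategy as refined by Kadiri--Lumley: reduce the existence of a prime in $(y,x]$ to the positivity of $\theta(x)-\theta(y)$, smooth against $f$, pass from $\theta$ to $\psi$, invoke the explicit Riemann--von Mangoldt formula, and control the resulting sum over non-trivial zeros of $\zeta(s)$ piecewise by height (and, on the tail, by real part).

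For the reduction, since $\Xi_\theta(u,X,\delta,t) \le \theta(x)-\theta(y)$ for every $t\in[a,1-a]$, integrating against $f$, dividing by $||f||_1$, and estimating the endpoint contributions on $[0,a]\cup[1-a,1]$ by Chebyshev-type bounds gives
\begin{equation*}
    \theta(x)-\theta(y) \;\ge\; I(\delta,u,X) - E(X)(e^u-1)X,
\end{equation*}
with $E(X)$ as in \eqref{E(x)}. The bounds \eqref{Costa_diff}--\eqref{Broadbent_diff} for $\psi-\theta$, together with the stated ranges on $m,\delta,a,X_0$, then yield $I(\delta,u,X) \ge J(\delta,u,X) - \omega\sqrt{X}$ with $\omega=1.0344\cdot 10^{-3}$. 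It therefore suffices to lower-bound $J$. Plugging the explicit Riemann--von Mangoldt formula into $\Xi_\psi$ and integrating against $f/||f||_1$ produces a main term $(e^u-1)X\cdot F(0,m,\delta)$, a trivial-zero contribution bounded by $\tfrac{u}{2(e^u-1)}X_0^{-2}$, and a zero-sum whose absolute value is to be estimated by repeated integration by parts in $t$. Using that $f^{(k)}(0)=f^{(k)}(1)=0$ for $0\le k<m$, each integration by parts trades a factor $\delta/(\rho+k)$ for an integral of $f^{(k)}$ weighted by $(1+\delta t)^{1+k}$, which is precisely $F(k,m,\delta)$; applying this with $k=0,1,m$ is what eventually produces the six $B_i$ bounds.

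The zero-sum is partitioned by the heights $0 < T_0 < T_1 < H$, and on the tail $\gamma > H$ by the real part through $\sigma_0$. For $0 < \gamma \le T_0$, pairing $\rho$ with $1-\bar\rho$ and using $|X^\rho|\le X$ yields the two candidates inside the $\min$ defining $B_0$, one with $k=0$ counted by $N_0$ and one with $k=1$ weighted by $S_0$; the same trick on $T_0 < \gamma \le T_1$ produces $B_1$. For $T_1 < \gamma \le H$ the Platt--Trudgian Riemann height forces $\beta=1/2$, giving the $\sqrt{X}$-decay absorbed into $B_2$ via $S_2$. For $\gamma > H$, zeros with $\beta \le \sigma_0$ contribute at most $X^{\sigma_0-1}$ per zero, and their reflections $1-\rho$ contribute $X^{-\sigma_0}$, yielding $B_3(\sigma_0)$ and $B_3(1-\sigma_0)$; zeros with $\beta > \sigma_0$ are counted via the Kadiri--Lumley--Ng density estimate $D(\sigma,T)$ in \eqref{zd_K} summed by Abel summation, with the Mossinghoff--Trudgian zero-free region \eqref{eqn:dlvp} furnishing the $X_0^{-1/(R_0\log\gamma)}$ weight, giving $B_{41}$ and $B_{42}$. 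Collecting all contributions, transposing, and dividing through by $(e^u-1)X_0$ recovers \eqref{condition}.

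The main obstacle is the exact bookkeeping: balancing the pairing $\rho\leftrightarrow 1-\bar\rho$ so that both branches of each $\min$ appear, choosing the correct number of integrations by parts in each height band, and tracking the explicit constant $1/(R_0\log H)$ produced by the zero-free region at the Riemann height. Individually each step is elementary, but marshalling the constants into the precise form of \eqref{condition} requires the careful accounting carried out by Kadiri and Lumley \cite[Thm.~2.7]{KadiriLumley}, whose derivation we invoke verbatim, our contribution being the sharper inputs of Section \ref{sec:Background_results} and the improved smooth weight applied downstream.
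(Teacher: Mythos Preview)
Your proposal is correct and follows the same route as the paper. Both the paper and your sketch treat Theorem~\ref{thm:KLMainTheorem} as a restatement of \cite[Thm.~2.7]{KadiriLumley}: reduce to $\theta(x)-\theta(y)>0$, smooth against $f$ to reach $I(\delta,u,X)-E(X)(e^u-1)X$, pass to $J$ via \eqref{Costa_diff}--\eqref{Broadbent_diff}, insert the explicit formula, and bound the zero-sum in the height bands $(0,T_0]$, $(T_0,T_1]$, $(T_1,H]$, $(H,\infty)$ with the last band further split at $\sigma_0$ using the zero-free region and the zero-density bound; the paper defers the detailed bookkeeping to Kadiri--Lumley just as you do.
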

\section{The Smoothing Function and Important Estimates}\label{sec:smoothing_fn_etc}

To compute the $B_i$ functions in \eqref{condition}, we need estimates for each $S_i$ and $F(k,m,\delta)$ for $k=0,1,m$. New estimates for the $S_i$ are given in Section \ref{ssec:Si} (Lemma \ref{lem:estimates}), and bounds for $F(k,m,\delta)$ are given in Lemma \ref{F_bounds}. For the latter, we first need to choose the smooth weight, which is discussed in \ref{ssec:smoothing_fn}. In Section \ref{ssec:F_k_m_delta} we give bounds for $F(k,m,\delta)$ in Lemma \ref{F_bounds}.

\subsection{Estimating each $S_i$}\label{ssec:Si}

For $S_1$, $S_2$, $S_3$ we will use the following lemma from Brent, Platt, and Trudgian \cite[Lem.~3]{BrentPlattTrudgian}, which is a refinement of Lehman's work in \cite[Lem.~1]{Lehman}. For $S_4$ and $S_5$, we will use the zero-density estimate \eqref{zd_K} from \cite{K_L_N_2018}.

\begin{lemma}[Brent--Platt--Trudgian]\label{lem:BPT}
Suppose that $A_0 = 2.067$, $A_1 = 0.059$, $A_2 = 1/150$, $2\pi \leq U \leq V$ and $\phi : [U, V] \to [0, \infty )$ is differentiable, monotone, and non-increasing on $[U,V]$ such that $\phi'(t) \leq 0$ and $\phi''(t)\geq 0$. Then
\begin{equation*}
    {\sum}'_{\substack{\beta + i\gamma\in Z(\zeta)\\U \leq \gamma \leq V}} \phi(\gamma)
    = \frac{1}{2\pi} \int_{U}^{V} \phi(t)\log{\frac{t}{2\pi}}dt + \phi(V)Q(V) - \phi(U)Q(U) + E_2(U,V),
\end{equation*}
in which $Q(T) = N(T) - P(T)$ (as defined in \eqref{classical_T}), $\sum'$ means that if $\beta + iV \in Z(\zeta)$, then the contribution $\phi(V)$ is weighted by $1/2$, and
$$|E_2(U,V)| \leq 2(A_0 + A_1\,\log{U})|\phi'(U)| + (A_1 + A_2) \frac{\phi(U)}{U}.$$
\end{lemma}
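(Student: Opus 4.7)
The plan is to recast the weighted sum as a Riemann--Stieltjes integral against the zero-counting function $N(t)$, substitute the decomposition $N = P + Q$, and handle the oscillatory piece $Q$ by integration by parts, exploiting the convexity and monotonicity hypotheses on $\phi$ to convert the remaining error integral into evaluations at the lower endpoint $U$.

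First, I would write
\begin{equation*}
    {\sum}'_{\substack{\beta + i\gamma\in Z(\zeta)\\U \leq \gamma \leq V}} \phi(\gamma) = \int_{U}^{V} \phi(t)\,dN(t),
\end{equation*}
with the $\sum'$ convention absorbed by the standard half-weighting of the Stieltjes measure at the right endpoint $V$. Substituting $dN = dP + dQ$ and computing $P'(t) = \frac{1}{2\pi}\log(t/2\pi)$ immediately produces the announced main term $\frac{1}{2\pi}\int_U^V \phi(t)\log(t/2\pi)\,dt$. Stieltjes integration by parts applied to the remaining piece yields
\begin{equation*}
    \int_U^V \phi(t)\,dQ(t) = \phi(V)Q(V) - \phi(U)Q(U) - \int_U^V Q(t)\phi'(t)\,dt,
\end{equation*}
which isolates the stated boundary contribution $\phi(V)Q(V) - \phi(U)Q(U)$ and leaves $E_2(U,V) = -\int_U^V Q(t)\phi'(t)\,dt$ to estimate.

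To bound $|E_2|$, a direct pointwise application of $|Q(t)| \leq R(t)$ is too crude, because it introduces terms growing with $V$. Instead I would exploit cancellation in $Q$ via a sharpened \emph{integrated} estimate of the shape $\bigl|\int_U^t Q(s)\,ds\bigr| \leq A_0 + A_1 \log U$, plus a correction of order $A_2/U$; this is the refinement that underlies the specific Brent--Platt--Trudgian constants. Writing $\mathcal{Q}(t) = \int_U^t Q(s)\,ds$ and integrating by parts a second time,
\begin{equation*}
    -\int_U^V Q(t)\phi'(t)\,dt = -\phi'(V)\mathcal{Q}(V) + \int_U^V \phi''(t)\mathcal{Q}(t)\,dt,
\end{equation*}
the boundary term at $t = U$ vanishes because $\mathcal{Q}(U) = 0$. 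Since $\phi'' \geq 0$ forces $|\phi'|$ to be non-increasing on $[U,V]$, we have $|\phi'(V)| \leq |\phi'(U)|$ and $\int_U^V \phi''(t)\,dt = |\phi'(U)| - |\phi'(V)|$; combining these with the uniform bound on $\mathcal{Q}$ delivers the $2(A_0 + A_1\log U)|\phi'(U)|$ piece. The remaining $(A_1+A_2)\phi(U)/U$ contribution then arises by carefully tracking the correction term in the integrated bound and invoking $\phi(t) \leq \phi(U)$ after one further integration by parts.

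The principal technical obstacle is obtaining the sharp integrated bound on $Q(t)$ with the precise constants $(A_0, A_1, A_2) = (2.067,\, 0.059,\, 1/150)$, which requires an analysis sharper on average than the pointwise estimate \eqref{classical_T} and is exactly what the Brent--Platt--Trudgian refinement builds upon the Platt--Trudgian zero-counting result. Once that input is granted, the remainder of the argument is essentially routine integration by parts, with the smoothness hypotheses on $\phi$ used solely to convert interior estimates into evaluations at the left endpoint $U$.
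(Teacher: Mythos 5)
First, note that the paper itself does not prove this lemma: it is quoted verbatim from Brent--Platt--Trudgian \cite[Lem.~3]{BrentPlattTrudgian}, with $A_0,A_1$ imported from Trudgian \cite[Thm.~2.2]{Trudgian_11} and $A_2$ from Lemma 2 of \cite{BrentPlattTrudgian}, so you are in effect reconstructing the source's argument. Your architecture --- the Stieltjes integral against $N$, the split $dN=dP+dQ$ giving the main term, one integration by parts producing $\phi(V)Q(V)-\phi(U)Q(U)$, then a second integration by parts against $\mathcal{Q}(t)=\int_U^t Q(s)\,ds$ using $\phi'\le 0$ and $\phi''\ge 0$ --- is indeed the Lehman/Brent--Platt--Trudgian route.

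The genuine gap is in the key quantitative input, which you misstate in a way that the rest of your argument cannot repair. Writing $N(t)=\tfrac{1}{\pi}\theta(t)+1+S(t)$, the estimates actually available are the pointwise bound $\left|\tfrac{1}{\pi}\theta(t)+1-P(t)\right|\le A_2/t$ and integrated bounds on $S$ of the form $\left|\int_0^t S(u)\,du\right|\le A_0+A_1\log t$ (equivalently Trudgian's two-endpoint bound with $\log$ of the \emph{upper} endpoint); in every case the logarithm grows with $t$. Your claimed uniform estimate $\left|\int_U^t Q(s)\,ds\right|\le A_0+A_1\log U$ is false once $t$ is much larger than $U$, and if you replace it by the correct $2(A_0+A_1\log t)$ (the factor $2$ arising from $|S_1(t)-S_1(U)|\le |S_1(t)|+|S_1(U)|$ --- note that in your scheme a uniform bound $M$ only yields $M\,|\phi'(U)|$, so your factor $2$ is otherwise unexplained), then the step ``combine with the uniform bound'' delivers $2(A_0+A_1\log V)\,|\phi'(U)|$, not the stated $\log U$. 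The passage from $\log V$ to $\log U$ is not cosmetic: one must keep the $A_1\log t$ inside $\int_U^V \phi''(t)(A_0+A_1\log t)\,dt$ and integrate by parts once more, using $\phi'\le 0$, whereupon the $\log V$ contributions cancel against the boundary term $-\phi'(V)\mathcal{Q}(V)$ and the price paid is exactly the extra $A_1\,\phi(U)/U$ in the final bound, while $A_2\,\phi(U)/U$ comes separately from $\left|\int_U^V (A_2/t)\,\phi'(t)\,dt\right|\le A_2\,\phi(U)/U$. Your closing remark about ``one further integration by parts'' gestures at this, but it is inconsistent with having already assumed the (false) $\log U$ uniform bound, and this cancellation step --- together with the external results fixing $(A_0,A_1,A_2)$ --- is precisely where the content of the lemma lies; as written, the proposal does not establish the stated error term.
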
 It is worth noting that the constants $A_0$ and $A_1$ are taken from Trudgian \cite[Thm.~2.2]{Trudgian_11} and $A_2$ is from Lemma 2 of \cite{BrentPlattTrudgian}. Also, to use this estimate for the standard $\sum$ instead of $\sum'$ notation, we will include the potential contribution of $\phi(V)/2$. Hence we will use $$|E_2(U,V)| \leq 2(A_0 + A_1\,\log{U})|\phi'(U)| + (A_1 + A_2) \frac{\phi(U)}{U} + \frac{\phi(V)}{2}.$$

\begin{lemma}\label{lem:estimates}
For integers $m\geq 2$, $X_0 \geq 3.99\cdot 10^{18}$, $T_1\in (T_0, H)$, and $\sigma\in \left(\frac{1}{2},1\right)$, we have
\begin{align}
    S_1(T_0,T_1) &= \frac{1}{2\pi} \log\frac{T_1}{T_0}\log\frac{\sqrt{T_0 T_1}}{2\pi} + \frac{R(T_0)}{T_0} + \frac{R(T_1)+\frac{1}{2}}{T_1} + \mathbf{E}_{T_0},\tag{S1}\label{eqn:estimateS1}\\
    S_2(m,T_1) &= \frac{1 + m\log\frac{T_1}{2\pi}}{2\pi m^2 {T_1}^m} - \frac{1 + m\log\frac{H}{2\pi}}{2\pi m^2 {H}^m} + \frac{R(T_1)}{{T_1}^{m+1}} + \frac{R(H)+\frac{1}{2}}{{H}^{m+1}} + \dot{\mathbf{E}}_{m,T_1},\tag{S2}\label{eqn:estimateS2}\\
    S_3(m) &= \frac{1 + m\log\frac{H}{2\pi}}{2\pi m^2 {H}^m} + \frac{R(H)}{{H}^{m+1}} + \ddot{\mathbf{E}}_{m,H},\tag{S3}\label{eqn:estimateS3}
\end{align}
in which
\begin{align*}
    \mathbf{E}_{T_0} &= (A_0 + A_1\log{T_0})\frac{2}{{T_0}^2} + (A_1 + A_2) \frac{1}{{T_0}^2},\\
    \dot{\mathbf{E}}_{m,T_1} &= (A_0 + A_1\log{T_1})\frac{2(m+1)}{{T_1}^{m+2}} + (A_1 + A_2) \frac{1}{{T_1}^{m+2}},\\
    \ddot{\mathbf{E}}_{m,H} &= (A_0 + A_1\log{H})\frac{2(m+1)}{{H}^{m+2}} + (A_1 + A_2) \frac{1}{{H}^{m+2}}.
\end{align*}
Moreover, we have
\begin{align}
    S_4(m,\sigma) &= \frac{D(\sigma, H)}{H^{m+1}} + \int_{H}^\infty \frac{\partial D(\sigma, t)}{\partial t} \frac{1}{t^{m+1}}\,dt,\tag{S4}\label{eqn:estimateS4}\\
    %\sum_{T_1 < \gamma \leq H} \frac{1}{\gamma^{m+1}} &\leq
    %\underbrace{}_{S_2(m,T_1)},\label{eqn:estimateS2}\\
    %\sum_{\gamma > H} \frac{1}{\gamma^{m+1}}
    %&\leq \underbrace{}_{S_3(m)} ,\label{eqn:estimateS3}\\
    %\sum_{\substack{\gamma > H\\\sigma_0 < \beta < 1}} \frac{1}{\gamma^{m+1}}
    %&\leq \underbrace{\frac{ D(\sigma_0, H)}{H^{m+1}} + \int_{H}^\infty \frac{ D'(\sigma_0, t)}{t^{m+1}}dt}_{S_4(m,\sigma_0)}.\label{eqn:estimateS4}
    S_5(X_0, m, \sigma) &= \frac{ D(\sigma, H)}{H^{m+1}} {X_0}^{-\frac{1}{R_0\log{H}}} + \int_{H}^\infty \frac{\partial D(\sigma, t)}{\partial t}\frac{1}{t^{m+1}} \,dt . \tag{S5}\label{eqn:estimateS5}
\end{align}
\end{lemma}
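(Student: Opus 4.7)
The plan splits into two cases. The estimates $S_1$, $S_2$, $S_3$ involve sums over all non-trivial zeros in a specified height range and follow from the $\sum$-version of Lemma \ref{lem:BPT} (extended from $\sum'$ as explained just after the lemma). The estimates $S_4$, $S_5$ are restricted to zeros with $\beta>\sigma$ and require Stieltjes integration by parts together with the zero-density bound $N(\sigma,t)\leq D(\sigma,t)$ from \eqref{zd_K} and the vanishing $N(\sigma,H)=0$, which holds for every $\sigma\geq\tfrac12$ because RH has been verified up to height $H$.

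For $S_1$ I take $\phi(t)=1/t$ on $[T_0,T_1]$; for $S_2$ and $S_3$ I take $\phi(t)=1/t^{m+1}$ on $[T_1,H]$ and $[H,\infty)$, respectively. In each case $\phi$ is positive, twice differentiable, monotone non-increasing with $\phi'\leq 0$ and $\phi''\geq 0$, so Lemma \ref{lem:BPT} applies. The main-term integral $\tfrac{1}{2\pi}\int_U^V\phi(t)\log(t/(2\pi))\,dt$ is evaluated by the substitution $u=\log(t/(2\pi))$ for $S_1$ and by a single integration by parts for $S_2$, $S_3$, reproducing the first terms of \eqref{eqn:estimateS1}--\eqref{eqn:estimateS3}. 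The boundary contributions $\pm\phi(U)Q(U)$ and $\pm\phi(V)Q(V)$ are absorbed via $|Q(T)|\leq R(T)$ from \eqref{classical_T}, and the extra $\phi(V)/2$ from the $\sum'$-to-$\sum$ adjustment is packaged into the $R(V)+\tfrac12$ factor. Substituting $|\phi'(U)|$ and $\phi(U)/U$ into the lemma's error bound yields $\mathbf{E}_{T_0}$, $\dot{\mathbf{E}}_{m,T_1}$, $\ddot{\mathbf{E}}_{m,H}$. For $S_3$ the limit $V\to\infty$ must be taken; the boundary there vanishes because $\phi(V)=1/V^{m+1}$ and $R(V)=O(\log V)$ with $m+1\geq 3$.

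For $S_4$ and $S_5$, set $\phi_4(t)=1/t^{m+1}$ and $\phi_5(t)=X_0^{-1/(R_0\log t)}/t^{m+1}$. Writing the sum as the Stieltjes integral $\int_H^\infty \phi(t)\,dN(\sigma,t)$ and integrating by parts gives
\[
\sum = -\int_H^\infty N(\sigma,t)\,\phi'(t)\,dt,
\]
both boundary contributions vanishing: at $H$ because $N(\sigma,H)=0$, and at $\infty$ because $N(\sigma,t)\leq D(\sigma,t)$ grows no faster than $t^{(8/3)(1-\sigma)}(\log t)^5$ while $(8/3)(1-\sigma)<m+1$ for $\sigma>\tfrac12$ and $m\geq 2$. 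A direct calculation shows $\phi_4'(t),\phi_5'(t)<0$ on $[H,\infty)$; for $\phi_5$ this uses that $(\ln X_0)/(R_0(\log t)^2)$ is orders of magnitude smaller than $m+1$ across the relevant range of $X_0$. Using $N(\sigma,t)\leq D(\sigma,t)$ then yields an upper bound, and a second integration by parts (with $D(\sigma,t)\phi(t)\to 0$ at infinity) produces
\[
\sum \leq D(\sigma,H)\,\phi(H) + \int_H^\infty \phi(t)\,\frac{\partial D(\sigma,t)}{\partial t}\,dt.
\]
For $\phi_4$ this is exactly \eqref{eqn:estimateS4}. For $\phi_5$, the integrand retains the factor $X_0^{-1/(R_0\log t)}\leq 1$, which may be dropped because $\partial D/\partial t\geq 0$ (as $D(\sigma,t)$ is increasing in $t$); the factor is then kept only at the boundary $t=H$, giving \eqref{eqn:estimateS5}.

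The main technical hurdle I foresee is the bookkeeping at infinity across all four improper integrations by parts, and verifying the sign of $\phi_5'$ uniformly in the parameter range, where the competition between the slowly increasing factor $X_0^{-1/(R_0\log t)}$ and the decaying $1/t^{m+1}$ needs to be controlled using the lower bound on $X_0$ and $m\geq 2$. Once these monotonicity and decay checks are in place, everything else reduces to elementary calculus and Stieltjes manipulation.
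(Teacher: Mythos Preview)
Your proposal is correct and follows essentially the same route as the paper: Lemma~\ref{lem:BPT} with $\phi(t)=1/t$ and $\phi(t)=1/t^{m+1}$ for $S_1$--$S_3$, and Stieltjes integration by parts combined with $N(\sigma,t)\leq D(\sigma,t)$ for $S_4$ and $S_5$, dropping the factor $X_0^{-1/(R_0\log t)}\leq 1$ inside the integral in the latter. Your write-up is in fact more careful than the paper's on two points---the explicit use of $N(\sigma,H)=0$ at the lower boundary, and the verification that $\phi_5'(t)\leq 0$---both of which the paper leaves implicit.
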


\begin{proof}
%Using Lemma \ref{lem:kadiri_faber} with $\phi(\gamma) = \gamma^{-1}$ we have \eqref{eqn:estimateS1}, and with $\phi(\gamma) = \gamma^{-(m+1)}$ we have \eqref{eqn:estimateS2} and \eqref{eqn:estimateS3}.
Using $|Q(T)|\leq R(T)$ from \eqref{classical_T}, we have
$$|\phi(V)Q(V) - \phi(U)Q(U)| \leq \phi(V)R(V) + \phi(U)R(U)$$
by the triangle inequality.
Using Lemma \ref{lem:BPT} with $\phi(\gamma) = \gamma^{-1}$ we obtain \eqref{eqn:estimateS1}, and with $\phi(\gamma) = \gamma^{-(m+1)}$ we retrieve \eqref{eqn:estimateS2} and \eqref{eqn:estimateS3}.

Suppose that $\phi(t) = o(1)$ as $t\rightarrow \infty$ and recall that $\phi'(t) \leq 0$. Then $\phi(t) N(\sigma, t) \to 0$ as $t\to \infty$, so
\begin{equation*}
    \sum_{\substack{\gamma > H\\\sigma \leq \beta < 1}} \phi(\gamma)
    = - \int_{H}^\infty N(\sigma, t)\phi'(t)dt.
\end{equation*}
Using \eqref{zd_K} for $T\geq H$, we observe that
\begin{align*}
    - \int_{H}^\infty N(\sigma, t)\phi'(t)dt
    \leq - \int_{H}^\infty  D(\sigma, t) \phi'(t) dt
\end{align*}
and with integration by parts, we have
\begin{align*}
   - \int_{H}^\infty  D(\sigma, t) \phi'(t)dt 
    \leq  D(\sigma, H)\phi(H) + \int_{H}^\infty \frac{\partial D(\sigma, t)}{\partial t} \phi(t) dt.
\end{align*}
%Also note that
%\begin{align*}
%    \frac{d D(\sigma, t)}{dt}
%    = A(\sigma) (\log{kt})^{2\sigma} (\log{t})^{5-4\sigma} t^{\frac{8}{3}(1 - \sigma)-1} \left({\frac{2\sigma}{\log{kt}}} +{\frac { \left( 5-4{\sigma} \right)}{\log{t}}} + \frac{8}{3}(1 - \sigma)\right)
%    + 2 B(\sigma) \,{\frac {\log{t}}{t}}.
%\end{align*}
Combining these observations, we have
\begin{equation}\label{eqn:abcdefghi}
    \sum_{\substack{\gamma > H\\\sigma \leq \beta < 1}} \phi(\gamma)
    \leq  D(\sigma, H)\phi(H) + \int_{H}^\infty \frac{\partial D(\sigma, t)}{\partial t}\phi(t)dt.
\end{equation}
Under the choice $\phi(\gamma) = \gamma^{-(m+1)}$, we retrieve \eqref{eqn:estimateS4}. Noting that $X_0^{\frac{-1}{R_0 \log t}}\leq 1$ for $\gamma\geq 1$, choosing $\phi(\gamma) = X_0^{\frac{-1}{R_0 \log \gamma}} \gamma^{-(m+1)}$ yields \eqref{eqn:estimateS5}.
\end{proof}

\subsection{The smoothing function}\label{ssec:smoothing_fn}

The weight used in \cite{RamareSaouter} and \cite{KadiriLumley} was $f_1(t)=(4t(1-t))^m$ over $t\in[0,1]$, which is a bell curve with $\max_{[0,1]} f_1(t) = 1$. Larger $m$ narrows the curve, and shrinks the function near its endpoints. We find that this function can be generalised to $f_2(t)=(At^n(1-t))^m$, where $n$ is a positive integer, and $A=(n+1)^{n+1} n^{-n}$ to keep $\max_{[0,1]} f_2(t) = 1$. For $n\geq 2$, $f_2$ is skewed to the left, with larger $n$ amplifying this skew. We will use $f_2$ to estimate $F(k,m,\delta)$, to optimise over $n$.

To compute the constraint in Theorem \ref{thm:KLMainTheorem} we can use closed-form expressions for $F(0,m,\delta)$, $\nu(f,a)$, $||f||_1$, and $||f^{(m)}||_2$ with $f=f_2$. Integration by parts yields 
\begin{align*}
    ||f||_1 =  \int_0^1 (At^n(1-t))^m dt = \frac{A^m m! (mn)!}{(mn+m+1)!},
\end{align*}
and similarly
\begin{align*}
    \int_0^1 (1+\delta t) f(t)dt
    &= A^m \int_0^1 (t^{mn} +\delta t^{mn+1}) (1-t)^m dt \\
    &= A^m m! \int_0^1 \left( \frac{(mn)! t^{mn+m}}{(mn+m)!} +\frac{(mn+1)! \delta t^{mn+m+1}}{(mn+m+1)!} \right) dt \\
    &= A^m m! \left( \frac{(mn)!}{(mn+m+1)!} +\frac{(mn+1)! \delta}{(mn+m+2)!} \right).
\end{align*}
So,
\begin{align*}
    F(0,m,\delta) &= \frac{\int_0^1 (1+\delta t) f(t)dt}{||f||_1}\\
    &= m! \left( \frac{(mn)!}{(mn+m+1)!} +\frac{(mn+1)! \delta}{(mn+m+2)!} \right) \frac{(mn+m+1)!}{(m!)(mn)!} \\
    &= 1 +\frac{(mn+1) \delta}{mn+m+2}.
\end{align*}

For $\nu(f,a)$, integration by parts can be used on the following two integrals:
\begin{align*}
    \int_{1-a}^1 f(t) dt &= -A^m \sum_{k=0}^{m-1} \frac{m!(mn)!}{(m-k)!(mn+k+1)!}a^{m-k}(1-a)^{mn+k+1} \\
    &\qquad + \frac{A^m m!(mn)!}{(mn+m+1)!}\left(1-(1-a)^{mn+m+1}\right)
\end{align*}
and
\begin{align*}
    \int_{0}^a f(t) dt = A^m \sum_{k=0}^{m} \frac{m!(mn)!}{(m-k)!(mn+k+1)!}(1-a)^{m-k} a^{mn+k+1}.
\end{align*}
Therefore, we have
\begin{align*}
    \nu(&a,T) = A^m \sum_{k=0}^{m-1} \frac{m!(mn)!}{(m-k)!(mn+k+1)!}\left((1-a)^{m-k} a^{mn+k+1} \right.\\
    &\left. - a^{m-k}(1-a)^{mn+k+1} \right)
    + \frac{A^m m!(mn)!}{(mn+m+1)!}\left(1-(1-a)^{mn+m+1} + a^{mn+m+1} \right).
\end{align*}

Finally, to bound $||f^{(m)}||_2$, note that the binomial theorem implies
\begin{equation*}
    f_2(t) = A^m t^{mn} (1-t)^m = A^m \sum_{k=0}^m \binom{m}{k} (-1)^k t^{mn+k},
\end{equation*}
and 
\begin{equation*}
    f_2^{(2m)}(t) = A^m \sum_{k=0}^m \binom{m}{k} (-1)^k \frac{(mn+k)!}{(mn+k-2m)!} t^{mn+k-2m}.
\end{equation*}
Since $f^{(k)}(0) = f^{(k)}(1) = 0$ for $0 \leq k \leq m - 1$, integration by parts gives
\begin{align*}
    \int_0^1 \left( f^{(m)} (t)\right) ^2 dt &= \left[ f^{(m)} (t) f^{(m-1)} (t) \right]_0^1 - \int_0^1 f^{(m+1)} (t) f^{(m-1)} (t) dt \\
    &= \ldots = (-1)^m \int_0^1 f^{(2m)} (t) f(t) dt \\
    &= (-1)^m A^{2m} \sum_{k=0}^m (-1)^k \binom{m}{k} \frac{(mn+k)!m!(2mn+k-2m)!}{(mn+k-2m)!(2mn+k-m+1)!}.
\end{align*}
It follows that
\begin{equation*}
||f^{(m)}||_2 = A^m \left( m! \sum_{k=0}^m (-1)^{m+k} \binom{m}{k} \frac{(mn+k)!(2mn+k-2m)!}{(mn+k-2m)!(2mn+k-m+1)!} \right)^{1/2}.
\end{equation*}

\subsection{Estimating $F(k,m,\delta)$}\label{ssec:F_k_m_delta}

With $f=f_2$ we can bound $F(k,m,\delta)$ using the following update to \cite[Lem.~3.1]{KadiriLumley}.
\begin{lemma}\label{F_bounds}
For $B=\frac{n}{n+1}$, with integer $n\geq 1$, let
\begin{align*}
    \lambda_0(m,n,\delta) &= \frac{2(B^n - B^{n+1})^m (mn+m+1)!}{m! (mn)!}, \\
    \lambda_1(m,n,\delta) &= (1+\delta)^2 \frac{2 (B^n - B^{n+1})^m (mn+m+1)!}{m! (mn)!}, \\
    \lambda(m,n,\delta) &= \sqrt{\frac{(1+\delta)^{2m+3}-1}{\delta(2m+3)}} \frac{(mn+m+1)!}{A^m m! (mn)!} ||f^{(m)}||_2.
\end{align*}
Then, we have $\lambda_0(m,n,\delta) \leq F(1,m,\delta) \leq \lambda_1(m,n,\delta)$ and $F(m,m,\delta) \leq \lambda(m,n,\delta)$.
\end{lemma}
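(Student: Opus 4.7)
\medskip

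\noindent\textbf{Proof plan.} The plan is to treat the three bounds in turn, reusing the closed form $\|f\|_1 = A^m m!(mn)!/(mn+m+1)!$ already computed in Section \ref{ssec:smoothing_fn}. First set $B := n/(n+1)$, which is the unique critical point of $f=f_2$ in $(0,1)$; a direct differentiation gives $f'(t) = A^m m\, t^{mn-1}(1-t)^{m-1}(n-(n+1)t)$, so $f'>0$ on $(0,B)$ and $f'<0$ on $(B,1)$, and the maximum value is $f(B) = (AB^n(1-B))^m$. With $A=(n+1)^{n+1}/n^n$ one checks $AB^n(1-B)=1$, so in fact $f(B)=1$, and $(B^n-B^{n+1})^m = 1/A^m$.

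For $F(0,m,\delta)$, since $f\geq 0$ we have $|f|=f$, hence $F(0,m,\delta) = \int_0^1 (1+\delta t) f(t)\,dt/\|f\|_1$; the elementary bracket $1\leq 1+\delta t \leq 1+\delta$ on $[0,1]$ immediately gives $1 \leq F(0,m,\delta) \leq 1+\delta$. For $F(1,m,\delta)$, the sign analysis above and $f(0)=f(1)=0$ yield
\begin{equation*}
    \int_0^1 |f'(t)|\,dt = \int_0^B f'(t)\,dt - \int_B^1 f'(t)\,dt = 2f(B) = 2(AB^n(1-B))^m = 2(B^n-B^{n+1})^m A^m.
\end{equation*}
Dividing by $\|f\|_1 = A^m m!(mn)!/(mn+m+1)!$ gives $\int_0^1 |f'(t)|\,dt/\|f\|_1 = \lambda_0(m,n,\delta)$, after which bracketing $1 \leq (1+\delta t)^2 \leq (1+\delta)^2$ on $[0,1]$ yields the two-sided bound $\lambda_0(m,n,\delta) \leq F(1,m,\delta) \leq \lambda_1(m,n,\delta)$.

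The third bound is where analyticity of the sign of $f^{(m)}$ breaks down (it will generally oscillate), so we abandon pointwise control and use Cauchy--Schwarz:
\begin{equation*}
    \int_0^1 (1+\delta t)^{m+1}|f^{(m)}(t)|\,dt \leq \left(\int_0^1 (1+\delta t)^{2m+2}\,dt\right)^{1/2} \|f^{(m)}\|_2.
\end{equation*}
A direct antiderivative gives $\int_0^1 (1+\delta t)^{2m+2}\,dt = ((1+\delta)^{2m+3}-1)/(\delta(2m+3))$, and dividing by $\|f\|_1$ produces exactly $\lambda(m,n,\delta)$, using the closed-form expression for $\|f^{(m)}\|_2$ derived at the end of Section \ref{ssec:smoothing_fn}.

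The main obstacle is purely bookkeeping: the asymmetry of $f_2$ means one must locate the unique critical point $B=n/(n+1)$ correctly and verify the identity $AB^n(1-B)=1$ so that the factor $(B^n-B^{n+1})^m$ in $\lambda_0,\lambda_1$ really is $1/A^m$ and cancels against the $A^m$ in $\|f\|_1$. Beyond that, the Cauchy--Schwarz step for $F(m,m,\delta)$ is a mild loss of sharpness but is forced by the lack of a usable closed form for $\int_0^1 |f^{(m)}(t)|\,dt$.
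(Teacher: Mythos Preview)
Your proposal is correct and follows essentially the same route as the paper: bracket $1\le 1+\delta t\le 1+\delta$ for $F(0,m,\delta)$; compute $\|f'\|_1$ by splitting at the unique critical point $B=n/(n+1)$ and then bracket $(1+\delta t)^2$ for $F(1,m,\delta)$; and apply Cauchy--Schwarz for $F(m,m,\delta)$. Your evaluation $\int_0^1|f'|\,dt = 2f(B)$ via the fundamental theorem of calculus is in fact a shade cleaner than the paper's version, which unpacks $f=A^m g^m$ with $g(t)=t^n-t^{n+1}$ and then recognises $\int g^{m-1}g'\,dt = g^m/m$; both land on $\|f'\|_1 = 2A^m(B^n-B^{n+1})^m$, and your extra verification that $AB^n(1-B)=1$ (so $f(B)=1$) is a useful consistency check the paper does not include.
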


\begin{proof}
For $F(1,m,\delta)$, we have $$\frac{||f'||_1}{||f||_1} \leq F(1,m,\delta) \leq (1+\delta)^2 \frac{||f'||_1}{||f||_1},$$ in which \begin{align*}
||f'||_1 = \int_0^1 |f'(t)| dt &= A^m m \int_0^1 (t^n-t^{n+1})^{m-1} \left| (nt^{n-1} - (n+1) t^n) \right| dt \\
&= A^m m \left( \int_0^B + \int_B^1 \right) (t^n-t^{n+1})^{m-1} \left| (nt^{n-1} - (n+1) t^n) \right| dt, 
\end{align*}
where $B$ is chosen such that the derivative of $g(t)=t^n-t^{n+1}$ (i.e. $nt^{n-1} - (n+1) t^n$) is non-negative for $0\leq t\leq B$ and non-positive for $B\leq t\leq 1$. So, we take $B=\frac{n}{n+1}$, and find
\begin{align*}
||f'||_1 &= A^m m \left( \int_0^B g(t)^{m-1} g'(t) dt - \int_B^1 g(t)^{m-1} g'(t) dt \right).
\end{align*}
For the first integral,
\begin{align*}
\int_0^B g(t)^{m-1} g'(t) dt &= g(B)^m - \int_0^B (m-1) g(t)^{m-1} g'(t) dt \\
&= \frac{1}{m} g(B)^m.
\end{align*}
Similar logic can be applied to the second integral, so that we have
$$||f'||_1 = 2A^m (B^n - B^{n+1})^m.$$

Lastly, for $F(m,m,\delta)$, the Cauchy--Schwarz inequality implies
\begin{align*}
F(m,m,\delta) &\leq \sqrt{\int_0^1 (1+\delta)^{2(m+1)} dt}  \frac{||f^{(m)}||_2}{||f||_1} \\
&= \sqrt{\frac{(1+\delta)^{2m+3}-1}{\delta(2m+3)}} \frac{(mn+m+1)!}{A^m m! (mn)!} ||f^{(m)}||_2.\qedhere
\end{align*}
\end{proof}

\section{Results and Future Research}\label{sec:results}

\subsection{Results}

The value of $N_0$ and $S_0$ can be computed using the list of zeros from the LMFDB database, after fixing $T_0$. Taking $T_0 = 104\,537\,615$, we have $N_0 = 2.6\cdot 10^{8}$ and $S_0 = 21.98308$. It would be possible to take $T_0$ higher, but this would restrict the range for $T_1$, which appears to have an optimal value based on $m$ and $\delta$. In particular, this optimum appears to be smaller for small $x$. Hence, the choice of $T_0$ was based on what the likely range of $m$ and $\delta$ implied the optimal $T_1$ to be.

With the estimates in Section \ref{sec:smoothing_fn_etc}, we used Theorem \ref{thm:KLMainTheorem} to calculate admissible values for $\Delta$ for all $x\geq x_0$. That is, after fixing $x_0$, we sought the largest $\Delta$ from sets of $m$, $n$, $a$, $\delta$, $T_1$, and $\sigma_0$ that satisfied (\ref{condition}). Numerical optimisation was used for all parameters except $n$ and $\sigma_0$. We found that $n=1$ was in fact the optimal choice. For $\sigma_0$, the optimal value is independent of the other parameters, as it is the point at which the estimate in (\ref{zd_K}) is smaller than (\ref{classical_T}) for all $\sigma\geq \sigma_0$ at $T=H$. Using (\ref{zd_K}) at $\sigma=0.78$, with $A=5.8773$ and $B=3.869$, we found this to be the case at $\sigma_0=0.7804$.

The difficulty in optimising over $m$, $a$, $\delta$, and $T_1$, is that the optimal value of each is a function of the others. Fixing all but one parameter would define the optimal value of that parameter, and potentially conceal the `true' optimum. We used the \texttt{differential\textunderscore evolution} function in Python's \texttt{scipy.optimize} package to maximise $\Delta$ with respect to $m$, $a$, $\delta$, $T_1$, and a small range of $n$.\footnote{Our code is available \href{https://github.com/EthanSLee/Interval-Estimates-for-Prime-Numbers/tree/main}{\texttt{here}}.} This function implements a differential evolution algorithm from Storn and Price \cite{S_P_97} for finding the global extremum of a multivariate function. We chose this method because it tests a wide range of parameter value combinations before converging on a solution. This ideally addresses the problem of having many local maximums. However, like those before us, we cannot claim to have found the true optimal values for each parameter, but we have sufficient reason to believe they are close.

Table \ref{table1} lists admissible values of $\Delta$ for each $x_0$, and the corresponding parameter values. Note that each $\Delta$ is calculated using parameter values rounded to seven decimal places. These values for $\Delta$ have the largest order we could find with the differential evolution method. {The calculations were run with precision of up to 400 decimal places. We also used functions from Python's \texttt{mpmath} library, such as \texttt{fdiv}, to maintain precision when combining the smallest values, e.g. from expressions containing $e^{-\delta}$ and $\delta^{m}$.} Furthermore, some expressions in Section \ref{sec:smoothing_fn_etc} were re-arranged or approximated in order to minimise the error in calculations. It should also be noted that the values of $m$ were restricted to integers because the bounds on $F(k,m,\delta)$ required integrating some $m$ times.

\begin{table}[H]
\begin{tabular}{|l|l|c|c|c|c|c|}
\hline
$\log x_0$ & $m$ & $\delta$ & $a$ & $T_1$ & $\Delta$ \\ \hline
$\log(4\cdot 10^{18})$ & 5 & $3.341898\cdot 10^{-8}$ & $0.2173221$ & $3.388300\cdot10^{8}$ & $3.90970\cdot 10^{7}$ \\ \hline
43  & 5 & $3.123609\cdot10^{-8}$ & $0.2172087$ & $3.565573\cdot10^{8}$ & $4.18168 \cdot10^{7}$ \\ \hline
46  & 4 & $6.874386\cdot10^{-9}$ & $0.1813398$ & $1.215214\cdot 10^9$ & $ 1.63940\cdot 10^{8}$ \\ \hline
50  & 5 & $1.208702\cdot 10^{-9}$ & $0.2101882$ & $8.262901\cdot 10^9$ & $1.06120\cdot 10^{9}$ \\ \hline
55  & 9 & $1.757070\cdot 10^{-10}$ & $0.2789679$ & $9.330703\cdot 10^{10}$ & $1.02884\cdot 10^{10}$ \\ \hline
60  & 30 & $4.873014\cdot 10^{-11}$ & $0.3832708$ & $9.890872\cdot 10^{11}$ & $7.69184\cdot 10^{10}$ \\ \hline
75  & 82 & $6.286379\cdot 10^{-11}$ & $0.4603978$ & $2.844455\cdot 10^{12}$ & $1.74043\cdot 10^{11}$ \\ \hline
90  & 82 & $6.270787\cdot 10^{-11}$ & $0.4628348$ & $2.361523\cdot 10^{12}$ & $1.84304\cdot 10^{11}$ \\ \hline
105 & 80 & $6.183604\cdot 10^{-11}$ & $0.4641109$ & $ 1.860117\cdot 10^{12}$ & $1.91886\cdot 10^{11}$ \\ \hline
120 & 84 & $6.335103\cdot 10^{-11}$ & $0.4660744$ & $2.020015\cdot 10^{12}$ & $1.97917\cdot 10^{11}$ \\ \hline
135 & 90 & $6.590771\cdot 10^{-11}$ & $0.4681018$ & $2.198840\cdot 10^{12}$ & $2.02553\cdot 10^{11}$ \\ \hline
150 & 79 & $6.090246\cdot 10^{-11}$ & $0.4666782$ & $2.703636\cdot10^{12}$ & $2.07053\cdot 10^{11}$ \\ \hline
300 & 79 & $5.965946\cdot 10^{-11}$ & $0.4699104$ & $2.695460\cdot10^{12}$ & $2.30126\cdot 10^{11}$ \\ \hline
600 & 72 & $5.361814\cdot 10^{-11}$ & $0.4699322$ & $1.002066\cdot 10^{12}$ & $2.51949\cdot 10^{11}$ \\ \hline
\end{tabular}
\caption{Admissible pairs of $x_0$ and $\Delta$ in Theorem \ref{thm:main} and the corresponding parameter values.}
\label{table1}
\end{table}

The parameter values in Table \ref{table1} can be better understood by considering the relative influence of each parameter. Large $\Delta$ comes from large $m$, small $\delta$, and large $a$, but \eqref{condition} requires small $m$, large $\delta$, and small $a$ --- this is why there are optimal values for each. Of these three, $\delta$ has most impact on $\Delta$, in that it determines the order. For the constraint, $\delta$ appears to be most influential.

\subsection{Future research}

It appears to be possible to generalise the method used in this paper for number fields. That is, one can obtain pairs $(x_0,\Delta)$ such that there exists a prime ideal $\mathfrak{p}$ in a number field $\mathbb{K}$ with norm $N(\mathfrak{p})$ in the interval $(x(1-\Delta^{-1}),x]$; Hulse and Murty prove a similar result in \cite{HulseMurty}. In this generalisation, one would need to establish an analogous set-up to what we outlined in Section \ref{sec:backgound_theory}, then estimate a sum over the non-trivial zeros of the Dedekind zeta-function associated to $\mathbb{K}$, which is denoted $\zeta_{\mathbb{K}}$. There is no generalised Riemann height, so we could only use zero-free and zero-density regions of $\zeta_{\K}$ to obtain this estimate. The second author provides zero-free results in \cite{Lee}, and Hasanalizade \textit{et al.} provide the latest zero-density results in \cite{HasanalizadeShenWong}. Technically, a number fields generalisation of our Theorem \ref{thm:main} would be challenging to establish, but it would be an interesting addition to the literature. For example, authors such as Takeda \cite{Takeda} and Chattopadhyay \textit{et al.} \cite{ChattopadhyayEtAl} have used Hulse and Murty's work \cite{HulseMurty} to establish other results.

%Trudgian provides the latest peer-reviewed zero-density results in \cite{TrudgianDZDR}, soon to be superseded by Hasanalizade \textit{et al.} \cite{HasanalizadeShenWong}

\bibliographystyle{amsplain} 
\bibliography{main}

\end{document}